\newcommand\NN{{\mathcal  N}} 
\newcommand\OO{{\mathcal  O}} 
\newcommand\II{{\mathcal  I}} 
\newcommand\CC{{\mathfrak   S}}
\newcommand\Cs{{\mathbb C}}
\newcommand\End{\mathop {\fam 0 End}\nolimits}
\newcommand\Hom{\mathop {\fam 0 Hom}\nolimits}
\newcommand\At{\mathop {\fam 0 At}\nolimits}
\newcommand\tr{\mathop {\fam 0 tr}\nolimits}
\newcommand\id{\mathop {\fam 0 id}\nolimits} 
\newcommand\rank{\mathop {\fam 0 rank}\nolimits}
\newtheorem{thm}{Theorem} 
\theoremstyle{remark}
\newtheorem{prop}{Proposition}
\begin{document}

\title{On the obstruction to extending a  vector bundle from a submanifold}
\author{Alexey V.  Gavrilov}
\email{gavrilov@phys.nsu.ru}
\address{Department of Physics, Novosibirsk State University, 2 Pirogov Street, Novosibirsk, 630090, Russia}
\classification{32L05, 32L10,  32C25.}
\keywords{vector bundle, Atiyah class, submanifold}

\begin{abstract}

For  the first obstruction to extending a holomorphic vector bundle from a submanifold of a complex manifold found by Griffiths we give an explicit formula in terms of the Atiyah class of the bundle. 
\end{abstract}

\maketitle


\section{Introduction}
\label{sec:introduction}

Let $Y$  be a complex manifold,   $X\subset Y$ a compact submanifold, and $\imath:X\to Y$ the embedding. 
For  a  holomorphic vector bundle  $E\to X$, is  there a holomorphic  bundle  $F\to Y$
such that $E=\imath^*F$?    This  problem was considered by Griffiths who found a sequence of obstructions  to such an extension \cite[Proposition 1.1]{Griffiths}  denoted  by $\omega(\alpha_\mu),\,\,\mu\ge 0$.   However, a  proof of  Proposition 1.1   is  omitted  and  there is no actual construction of this invariants except for  line bundles.  In this note we give an explicit formula for the first obstruction. 

Following \cite{Griffiths}  we consider the analytic space  $X_1=(X, \OO_Y/\II^2)$ where   $\II\subset \OO_Y$ is  the ideal sheaf of $X$,
and interpret the  embedding of $X$ into $Y$ as   a composition  of two morphisms $X\hookrightarrow  X_1\hookrightarrow  Y$. If 
$E=\imath^*F$ then there is also  an intermediate  bundle $E_1\to X_1$  such that $E=\imath^*E_1$ which is the natural  first step in  extending $E$ over $Y$. 
(We use  the same symbol $\imath$  for  both embeddings  $X\to X_1$ and $X\to Y$. This should do 
no harm as its meaning must  be obvious from the context.) The  obstruction to such a bundle  can be described  as follows. There is a certain cohomology class  $\kappa_{X\slash Y}\in H^1(X, \Hom(\NN_{X\slash Y}, T_X))$ which we call a Kodaira-Spencer invariant (for reasons explained below). Denote 
$$ \At(E)\cdot \kappa_{X\slash Y}  =\tr(\At(E)\cup \kappa_{X\slash Y}, T_X)\in H^2(X,\NN_{X\slash Y}^*\otimes\End(E))$$
where $\At(E)\in H^1(X, \Omega_X^1\otimes\End(E))$ is the  Atiyah class of the bundle.

\begin{thm}  For a  holomorphic vector bundle $E\to X$ there is a bundle  $E_1\to X_1$ such that $E=\imath^*E_1$
iff $\At(E)\cdot\kappa_{X\slash Y}=0$. 
\end{thm}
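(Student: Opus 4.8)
The plan is to present $X_{1}$ as a square-zero thickening of $X$ and then to compute, by a \v{C}ech argument, the single obstruction governing such extensions. Write $\NN^{*}:=\NN_{X\slash Y}^{*}=\II\slash\II^{2}$ for the conormal sheaf; then $\OO_{X_{1}}=\OO_{Y}\slash\II^{2}$ sits in an exact sequence of sheaves of rings on the topological space $X$,
\[
0\longrightarrow \NN^{*}\longrightarrow \OO_{X_{1}}\longrightarrow \OO_{X}\longrightarrow 0,
\]
in which $\NN^{*}$ is a square-zero ideal. A bundle $E_{1}\to X_{1}$ with $\imath^{*}E_{1}\cong E$ is exactly a locally free $\OO_{X_{1}}$-module restricting to $E$ on $X$, so the theorem is an instance of the standard obstruction theory for extending a locally free sheaf along a square-zero extension: I would produce a class $o(E)\in H^{2}(X,\NN^{*}\otimes\End(E))$ such that $E_{1}$ exists if and only if $o(E)=0$, and then identify $o(E)$ with $\At(E)\cdot\kappa_{X\slash Y}$. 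In \v{C}ech terms this is elementary, so I would carry it out directly rather than quote a reference.

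To construct $o(E)$, choose a cover $\{U_{\alpha}\}$ of $X$ by Stein opens, fine enough that $E$ is trivial on each $U_{\alpha}$ — with holomorphic frames $e_{\alpha}$ and transition matrices $g_{\alpha\beta}\in\mathrm{GL}_{r}(\OO_{X}(U_{\alpha\beta}))$ — and fine enough that each $U_{\alpha}$ admits a holomorphic retraction onto $X$ from a neighbourhood in $Y$; equivalently there is a ring section $s_{\alpha}\colon\OO_{X}|_{U_{\alpha}}\to\OO_{X_{1}}|_{U_{\alpha}}$ of the projection in the sequence above. The differences $\theta_{\alpha\beta}:=s_{\alpha}-s_{\beta}$ are $\OO_{X}$-linear derivations $\OO_{X}\to\NN^{*}$, i.e.\ sections over $U_{\alpha\beta}$ of $T_{X}\otimes\NN^{*}=\Hom(\NN_{X\slash Y},T_{X})$, and $\{\theta_{\alpha\beta}\}$ is a \v{C}ech $1$-cocycle representing $\kappa_{X\slash Y}$ (this is essentially the definition of the Kodaira--Spencer invariant). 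Extending $E|_{U_{\alpha}}$ to a free $\OO_{X_{1}}|_{U_{\alpha}}$-module via $s_{\alpha}$ and lifting the transition matrices to $\tilde g_{\alpha\beta}:=s_{\beta}(g_{\alpha\beta})\in\mathrm{GL}_{r}(\OO_{X_{1}}(U_{\alpha\beta}))$, the deviation from the cocycle identity,
\[
\tilde g_{\alpha\gamma}^{-1}\,\tilde g_{\alpha\beta}\,\tilde g_{\beta\gamma}=1+c_{\alpha\beta\gamma},\qquad c_{\alpha\beta\gamma}\in(\NN^{*}\otimes\End(E))(U_{\alpha\beta\gamma}),
\]
defines a $2$-cocycle, and a routine argument shows that $E_{1}$ exists exactly when $\{c_{\alpha\beta\gamma}\}$ is a coboundary; I set $o(E)=[\,c_{\alpha\beta\gamma}\,]$.

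It remains to check $o(E)=\At(E)\cdot\kappa_{X\slash Y}$. A short computation, using $s_{\gamma}(fg)=s_{\gamma}(f)s_{\gamma}(g)$, $s_{\beta}-s_{\gamma}=\theta_{\beta\gamma}$, the relation $\NN^{*}\cdot\NN^{*}=0$, and the cocycle identity $g_{\alpha\beta}g_{\beta\gamma}=g_{\alpha\gamma}$, gives $\tilde g_{\alpha\beta}\tilde g_{\beta\gamma}=\tilde g_{\alpha\gamma}+\theta_{\beta\gamma}(g_{\alpha\beta})\,g_{\beta\gamma}$, hence, expressed as a section of $\NN^{*}\otimes\End(E)$ in the frame $e_{\alpha}$,
\[
c_{\alpha\beta\gamma}=\theta_{\beta\gamma}(g_{\alpha\beta})\,g_{\alpha\beta}^{-1}.
\]
On the other hand the Atiyah class is represented by the familiar cocycle $a_{\alpha\beta}=(dg_{\alpha\beta})\,g_{\alpha\beta}^{-1}$ (the difference $\nabla_{\alpha}-\nabla_{\beta}$ of the connections that are trivial in the frames $e_{\alpha}$, written in the frame $e_{\alpha}$), so contracting the \v{C}ech cup product $\{a_{\alpha\beta}\otimes\theta_{\beta\gamma}\}$ along $\Omega^{1}_{X}\otimes T_{X}\to\OO_{X}$ — that is, applying $\tr(\,\cdot\,,T_{X})$ as in the statement — and using $\langle df,\theta\rangle=\theta(f)$ produces the very same cocycle $\alpha\beta\gamma\mapsto\theta_{\beta\gamma}(g_{\alpha\beta})\,g_{\alpha\beta}^{-1}$. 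Hence $o(E)=\At(E)\cdot\kappa_{X\slash Y}$ in $H^{2}(X,\NN^{*}\otimes\End(E))$ — up to the overall sign fixed by the sign conventions in $\At(E)$ and in $o(E)$ — and both directions of the theorem follow from the first paragraph. The one genuinely delicate point is the bookkeeping in this last step: as the coefficient sheaf $\End(E)$ is nontrivial, one must keep careful track of the local frame in which each cochain is written, check that $\{c_{\alpha\beta\gamma}\}$ is indeed a cocycle whose class is independent of the choices of $\{U_{\alpha}\}$, $e_{\alpha}$ and $s_{\alpha}$, and pin down the sign; the choice $\tilde g_{\alpha\beta}=s_{\beta}(g_{\alpha\beta})$, using the \emph{second} index, is what makes $o(E)$ equal the cup product on the nose rather than merely up to a coboundary.
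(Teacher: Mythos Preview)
Your argument is correct. You realize $X_1$ as a square-zero thickening, lift the transition matrices via local ring splittings $s_\alpha$, and read off the obstruction $2$-cocycle $c_{\alpha\beta\gamma}=\theta_{\beta\gamma}(g_{\alpha\beta})g_{\alpha\beta}^{-1}$, which you then recognize as the contracted cup product $\At(E)\cup\kappa_{X\slash Y}$. The bookkeeping (frames, signs, independence of choices) is handled as carefully as it needs to be.

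The paper takes a genuinely different route. Rather than computing the obstruction cocycle directly, it first proves a separate proposition: an extension $E_1\to X_1$ exists iff $\At(E)$ lies in the image of $\imath^*\colon H^1(X,\Omega^1_{Y|X}\otimes\End(E))\to H^1(X,\Omega^1_X\otimes\End(E))$. This uses the description of $\OO_{X_1}$ via pairs $(f,\beta)$ with $\beta\in\Omega^1_{Y|X}$ and $\imath^*\beta=df$, so that transition data for $E_1$ are pairs $(g_{ij},\Delta_{ij})$ with $\imath^*\Delta_{ij}=dg_{ij}$; the class of $g_{\lambda i}\Delta_{i\lambda}-g_{\lambda j}\Delta_{j\lambda}$ is then an ``Atiyah class'' $\At(E_1)\in H^1(X,\Omega^1_{Y|X}\otimes\End(E))$ mapping to $\At(E)$. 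The theorem then follows from the long exact sequence associated with $0\to\NN_{X\slash Y}^*\to\Omega^1_{Y|X}\to\Omega^1_X\to 0$ (tensored with $\End(E)$), after checking that the connecting homomorphism is $\alpha\mapsto\alpha\cdot\kappa_{X\slash Y}$.

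What each approach buys: yours is the standard deformation-theoretic package for square-zero extensions, is entirely self-contained, and makes the identification $o(E)=\At(E)\cdot\kappa_{X\slash Y}$ a one-line cocycle comparison. The paper's detour through $\Omega^1_{Y|X}$ produces as a by-product the intermediate invariant $\At(E_1)$, which it then uses to analyze the set of extensions (the uniqueness discussion): extensions with the same $\At(E_1)$ are parametrized by $H^0(X,\Omega^1_X\otimes\End(E))$, and the remaining freedom sits in $H^1(X,\NN_{X\slash Y}^*\otimes\End(E))$ via the same long exact sequence. Your argument yields the existence criterion just as cleanly but does not surface this refinement without extra work.
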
  

 So, $\omega(\alpha_0)$  is actually a combination of  the Atiyah class (which has nothing to do with the embedding) and the  Kodaira-Spencer  invariant  (which has nothing to do with the bundle). Unfortunately,  higher order obstructions  probably cannot be separated in  this  manner into the  embedding  part  and the bundle part. 

\section{Kodaira-Spencer invariant }
\label{sec:invariant} 

For a sufficiently fine cover $\{U_i\}$ of $X$ we may chose holomorphic sections 
$\Psi_i\in\Gamma(U_i, \Hom(\NN_{X\slash Y}, T_Y))$
such that for the natural   map $r:  \Hom(\NN_{X\slash Y}, T_Y)\to  \Hom(\NN_{X\slash Y}, \NN_{X\slash Y})$ 
we have $r\circ \Psi_i=\mathbbm{1}$. The Kodaira-Spencer invariant   $\kappa_{X\slash Y}\in H^1(X, \Hom(\NN_{X\slash Y}, T_X))$
can be defined as  the cohomology class of the cocycle 
$$\Psi_i-\Psi_j\in\Gamma(U_i\cap U_j, \Hom(\NN_{X\slash Y}, T_X)).$$
It   does not depend on the choice of $\Psi_i$   so   this  is indeed  an  invariant of $X$ as a  submanifold. 

If  $Y\to B$ is   a deformation of $X=X_0$ then   $\NN_{X\slash Y}$ is a trivial bundle naturally isomorphic to the tangent space $T_{B, 0}$. In this case  we  may interpret the  usual  Kodaira-Spencer map  $H^0(X, \NN_{X\slash Y})\to H^1(X, T_X)$ as a cohomology class in $H^1(X, \Hom(\NN_{X\slash Y}, T_X))$, and this class  is equal to  $\kappa_{X\slash Y}$.  A generalization  of  the  Kodaira-Spencer construction to  the  case  when $\NN_{X\slash Y}$ is not necessarily trivial seems  straightforward  enough  but  the author does not know if  it  actually  appeared  in  the literature. However, this   invariant does appear    in \cite{Griffiths} in a different guise,  it is  the first  obstruction to 
a map $f: Y\to X$ such that $f\circ \imath=\id$  \cite[Proposition 1.6]{Griffiths}.  
In hindsight, it is not very surprizing that this obstruction is related to extensions of bundles: 
if such a map exists then one obviously can  take $F=f^*E$.

\section{Atiyah class}
\label{sec:Atiyah} 

For sections of the bundle $\End(E)$ we use a notation which seems quite convenient but  probably  uncommon. 
We assume forth that a cover $\{U_i\}$  of $X$ is fixed together with 
trivializations of $E$ over $U_i$, so we have fixed   transition functions $g_{ij}: U_i\cap U_j\to GL(r,\Cs)$. 
Let $U\subset X$ be an open set and $m_\lambda: U\cap U_\lambda\to \End(\Cs^r)$ be some holomorphic matrices
where $r=\rank E$. If matrices $g_{i\lambda}m_\lambda g_{\lambda i}$ do not depend on $\lambda$ (where they are defined) then we can interpret the collection $\{m_\lambda\}$ as a section of $\End(E)$   and write $m_\lambda\in\Gamma(U, \End(E))$. (To make it  a consistent notation  we have to treat  $\lambda$ as  a special  index used  for this purpose only.)

 In this  notation, the  Atiyah class $\At(E)$ is  represented \cite{Atiyah}\cite{Kapranov}  by the  cocycle 
$$g_{\lambda i}dg_{i\lambda}-g_{\lambda j}dg_{j\lambda}\in  \Gamma(U_i\cap U_j, \Omega_X^1\otimes \End(E)).$$
(By the way,  we cannot write $g_{\lambda i}dg_{i\lambda}\in \Gamma(U_i, \Omega_X^1\otimes \End(E))$ because this  term  alone is not a well defined section of   $\Omega_X^1\otimes \End(E)$.)  If there is a bundle $F\to Y$ such that $E=\imath^*F$ then its Atiyah class can be introduced  the same way, but  we are only interested in its  restriction  to $X$ corresponding to the cocycle 
$g_{\lambda i}dG_{i\lambda}-g_{\lambda j}dG_{j\lambda}\in  \Gamma(U_i\cap U_j, \Omega_{Y|X}^1\otimes \End(E))$ where $G_{ij}$ are   transition functions  of $F$ such that $G_{ij}=g_{ij}$ on  $U_i\cap U_j$.

\section{Vector bundles over $X_1$}
\label{sec:vector}

A section  of the structure sheaf of $X_1$ may be interpreted as a pair $(f,\beta)\in \OO_{X_1}(U)$  
where $f\in\OO_X(U)$ and $\beta\in \Omega_{Y|X}^1(U)$ is a form such that  $\imath^*\beta=df$. 
Consequently, transition functions of a vector bundle  $E_1\to X_1$ are pairs $(g_{ij}, \Delta_{ij})$ 
where  $g_{ij}$ are the transition functions of $\imath^* E_1$ and $\Delta_{ij}\in  \Gamma(U_i\cap U_j, \Omega_{Y|X}^1\otimes \End(\Cs^r))$ are forms such that 
$\imath^*\Delta_{ij}=dg_{ij}$ and $\Delta_{ij}g_{ji}+g_{ij}\Delta_{jk}g_{ki}+g_{ik}\Delta_{ki}=0$. 
We say that two extensions  $E_1,E_1'\to X_1$  of the bundle $E\to X$ are equivalent  if they are isomorphic as vector
bundles over $X_1$. (Griffiths used the same definition  \cite[I.4]{Griffiths}. It  is actually  the finest  equivalence  in this context  even if   it apparently   ignores  the original bundle $E$.)
Assuming  as before  that  $g_{ij}$ are  fixed, it is easy to see that  two  forms $\Delta_{ij}$ and $\Delta'_{ij}$ correspond to equivalent bundles  if $\Delta'_{ij}-\Delta_{ij}=B_i g_{ij}-g_{ij}B_j$
where $B_i\in \Gamma(U_i, \Omega_{Y|X}^1\otimes \End(\Cs^r))$ satisfy $\imath^*B_i=0$.

The Atiyah class $\At(E_1)\in H^1(X, \Omega_{Y|X}^1\otimes \End(E))$ of such a bundle  can be defined  by the cocycle 
$A_{ij}=g_{\lambda i}\Delta_{i\lambda}-g_{\lambda j}\Delta_{j\lambda}$  
(for equivalent bundles we have  $A'_{ij}-A_{ij}=g_{\lambda i}B_ig_{i\lambda}-g_{\lambda j}B_jg_{j\lambda}$ which is, of course, a coboundary).  
Note that the equality $\At(E_1)=\At(E_1')$  gives  another  equivalence relation, in general more coarse then the natural one. 

\section{The proof}
\label{sec:proof}

\begin{prop}
An extension $E_1\to X_1$ of a holomorphic vector bundle $E\to X$ exists iff $\At(E)$ is in the image of the  map
$$\imath^*: H^1(X,  \Omega_{Y|X}^1\otimes\End(E))\to  H^1(X, \Omega_X^1\otimes\End(E)).$$
\end{prop}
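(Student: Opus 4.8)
The plan is to reduce the cohomological condition to the explicit \v{C}ech data of Section~\ref{sec:vector}. The forward implication is just functoriality of the Atiyah class: if $E_1\to X_1$ extends $E$, with transition functions $(g_{ij},\Delta_{ij})$, then the cocycle $A_{ij}=g_{\lambda i}\Delta_{i\lambda}-g_{\lambda j}\Delta_{j\lambda}$ representing $\At(E_1)$ restricts, via $\imath^*\Delta_{i\lambda}=dg_{i\lambda}$, to $g_{\lambda i}dg_{i\lambda}-g_{\lambda j}dg_{j\lambda}$, which is the standard Atiyah cocycle of $E$. Hence $\imath^*\At(E_1)=\At(E)$ and $\At(E)$ lies in the image of $\imath^*$.

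For the converse I would proceed constructively. Fix a cover $\{U_i\}$, adapted to the trivializations of $E$ and fine enough that each $U_i$ is Stein, and let $\tilde\alpha\in H^1(X,\Omega^1_{Y|X}\otimes\End(E))$ with $\imath^*\tilde\alpha=\At(E)$. First I would normalize a cocycle representative: choose $A_{ij}$ representing $\tilde\alpha$; since $\imath^*A_{ij}$ is cohomologous to the standard Atiyah cocycle, write $\imath^*A_{ij}-(g_{\lambda i}dg_{i\lambda}-g_{\lambda j}dg_{j\lambda})=s_i-s_j$ with $s_i\in\Gamma(U_i,\Omega^1_X\otimes\End(E))$, lift each $s_i$ to $\tilde s_i\in\Gamma(U_i,\Omega^1_{Y|X}\otimes\End(E))$ along the surjection $\Omega^1_{Y|X}\otimes\End(E)\to\Omega^1_X\otimes\End(E)$ (possible over the Stein $U_i$ since the kernel $\NN_{X\slash Y}^*\otimes\End(E)$ is coherent), and replace $A_{ij}$ by $A_{ij}-\tilde s_i+\tilde s_j$. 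Then $\imath^*A_{ij}=g_{\lambda i}dg_{i\lambda}-g_{\lambda j}dg_{j\lambda}$ exactly.

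Now let $A^{(ij)}_\lambda$ denote the matrix representative of the twisted section $A_{ij}$ over $U_i\cap U_j\cap U_\lambda$ in the $\lambda$-frame, so that $g_{k\lambda}A^{(ij)}_\lambda g_{\lambda k}$ is independent of $\lambda$, and (since $A$ is a \v{C}ech cocycle) $A^{(ij)}_\lambda+A^{(jk)}_\lambda=A^{(ik)}_\lambda$ with $A_{ii}=0$. I then define
$$\Delta_{ij}:=g_{ij}\,A^{(ij)}_j\in\Gamma(U_i\cap U_j,\Omega^1_{Y|X}\otimes\End(\Cs^r)).$$
From $\imath^*A^{(ij)}_j=g_{ji}dg_{ij}$ we get $\imath^*\Delta_{ij}=dg_{ij}$, and $A_{ii}=0$ gives $\Delta_{ii}=0$. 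It remains to verify the cocycle identity $\Delta_{ij}g_{ji}+g_{ij}\Delta_{jk}g_{ki}+g_{ik}\Delta_{ki}=0$ from Section~\ref{sec:vector}. Substituting the definition and using $\lambda$-independence of $A_{ij},A_{jk},A_{ki}$ to rewrite each of the three terms in the common $i$-frame, the identity collapses to $A^{(ij)}_i+A^{(jk)}_i+A^{(ki)}_i=0$, which is the \v{C}ech cocycle relation evaluated at $\lambda=i$ together with $A_{ii}=0$. Hence $(g_{ij},\Delta_{ij})$ are transition functions of a bundle $E_1\to X_1$ with $\imath^*E_1=E$.

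The one genuinely delicate point is this final verification: $A_{ij}$ carries two a priori unrelated structures — it is a \v{C}ech $1$-cocycle for the sheaf $\Omega^1_{Y|X}\otimes\End(E)$, and it is pointwise a section of that twisted sheaf (the conjugation/$\lambda$-independence) — and both must be fed in, in exactly the right combination, to produce the single mixed identity that characterises a bundle over $X_1$. (Conceptually, this just says that the connecting map $\delta\colon H^1(X,\Omega^1_X\otimes\End(E))\to H^2(X,\NN_{X\slash Y}^*\otimes\End(E))$ of the twisted conormal sequence sends $\At(E)$ to Griffiths' obstruction to lifting the $g_{ij}$ over $\OO_{X_1}$; the construction above trivialises that obstruction by hand.) Everything else — the choice of a fine Stein cover and the normalization of the representative — is routine, using compactness of $X$.
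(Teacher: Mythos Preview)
Your proof is correct and follows essentially the same approach as the paper: the forward direction is identical, and for the converse you normalize the lifted cocycle (the paper's remark that ``the coboundary part can be lifted to $\Omega^1_{Y|X}$ so we are free to assume $C_i=0$'' is exactly your lifting of $s_i$ to $\tilde s_i$ over a Stein cover) and then set $\Delta_{ij}=g_{ij}A^{(ij)}_j$, which coincides with the paper's $\Delta_{ij}=g_{i\lambda}D_{ij}g_{\lambda j}$ by the $\lambda$-independence relation. Your explicit verification of the $X_1$-cocycle identity simply spells out what the paper leaves implicit.
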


\begin{proof}

If there is an extension then $\At(E)=\imath^*\At(E_1)$  so this condition is  necessary. Conversely, let 
$D_{ij}\in \Gamma(U_i\cap U_j, \Omega_{Y|X}^1\otimes \End(E))$ be a cocycle representing a cohomology class with the image
$\At(E)$. Then we have 
$$\imath^*D_{ij}=g_{\lambda i}dg_{i\lambda}-g_{\lambda j}dg_{j\lambda}+g_{\lambda i}C_ig_{i\lambda}-g_{\lambda j}C_jg_{j\lambda} $$
for some $C_i\in  \Gamma(U_i, \Omega_{X}^1\otimes \End(\Cs^r))$.  The coboundary part can be lifted to $\Omega_{Y|X}^1$ so we are free to  assume that $C_i=0$.
    Then $(g_{ij}, \Delta_{ij})$ with   $\Delta_{ij}=g_{i\lambda}D_{ij}g_{\lambda j}$ are  transition functions of a vector bundle over $X_1$  with this Atiyah class. 
\end{proof}

\begin{proof}[Proof of the Theorem]

From the  exact sequence  of sheaves 
$$0\to \NN_{X\slash Y}^*\to  \Omega_{Y|X}^1\to \Omega_X^1\to 0$$
we have a long   exact sequence in cohomology 

\xymatrix{H^1(X, \Omega_{Y|X}^1\otimes\End(E))\ar[r]^{\imath^*}  & H^1(X, \Omega_X^1\otimes\End(E))\ar[r] &   H^2(X,\NN_{X\slash Y}^*\otimes\End(E)).}

Under close examination  the coboundary map coincides with $\alpha\mapsto\alpha\cdot  \kappa_{X\slash Y}$, 
hence $\At(E)\cdot\kappa_{X\slash Y}=0$ iff $\At(E)$ belongs to the image of $\imath^*$. 

\end{proof} 

\section{Obstructions to uniqueness}
\label{sec:unique}

According to  \cite[Proposition 1.4]{Griffiths},   the obstructions to uniqueness of an extension of $E\to X$ over  $X_1$ are cohomology classes in $H^1(X,\NN_{X\slash Y}^*\otimes\End(E))$. (A  proof of it is also  omitted though.)  It may be  convenient to distinguish  two different  obstructions,    the vector bundles $E_1\to X_1$ extending the same  bundle $E$   can have different  Atiyah classes $\At(E_1)$, and also  there can be many bundles  with the same Atiyah  class among them. 

We assume  that the  vector bundle $E\to X$ under consideration has  at least one extension and denote $\CC(E)=\{E_1\to X_1\mid \imath^*E_1=E\}\neq\emptyset$. There is a natural action of the group   $H^1(X,\NN_{X\slash Y}^*\otimes\End(E))$ on this set which may be described as follows: for a given  $e\in \CC(E)$ with transition functions $(g_{ij}, \Delta_{ij})$  and a given cohomology class $\gamma\in H^1(X,\NN_{X\slash Y}^*\otimes\End(E))$ we denote by $e'=e+\gamma\in \CC(E)$ a bundle with transition functions $(g_{ij}, \Delta'_{ij})$ defined by  
$\Delta'_{ij}=\Delta_{ij}+g_{i\lambda} C_{ij}g_{\lambda j}$ where  $C_{ij}\in \Gamma(U_i\cap U_j, \NN_{X\slash Y}^*\otimes\End(E))$ is a cocycle representing 
$\gamma$. 
  
\begin{prop}

The map 
$$H^1(X,\NN_{X\slash Y}^*\otimes\End(E))\to \CC(E),\, \gamma\mapsto e+\gamma$$
is a bijection making  the diagram 

\xymatrix@C=2pc@R=2pc{\CC(E)\ar[r]^{\At} & H^1(X,  \Omega_{Y|X}^1\otimes\End(E))\ar[d]^{-\At(e)}\\
H^1(X,\NN_{X\slash Y}^*\otimes\End(E))\ar[r]\ar[u]^{+e} & H^1(X,  \Omega_{Y|X}^1\otimes\End(E))}
commutative. 
\end{prop}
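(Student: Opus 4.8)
The plan is to verify the three assertions in turn --- that the map is well defined, that it is a bijection, and that the square commutes --- working throughout at the level of the explicit transition cocycles $(g_{ij},\Delta_{ij})$ introduced in Section~\ref{sec:vector}.

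First I would check that $\gamma\mapsto e+\gamma$ is well defined, i.e.\ that it depends neither on the choice of cocycle $C_{ij}$ representing $\gamma$ nor on the $\Delta_{ij}$ chosen for $e$ within its equivalence class. For the first point, replacing $C_{ij}$ by $C_{ij}+(D_i g_{ij}-g_{ij}D_j)g_{ji}$-type coboundary --- more precisely a coboundary $C_i'-C_j'$ in $\Gamma(\cdot,\NN_{X\slash Y}^*\otimes\End(E))$, written in the $\lambda$-notation as $g_{\lambda i}C_i'g_{i\lambda}-g_{\lambda j}C_j'g_{j\lambda}$ --- changes $\Delta'_{ij}$ by $B_i g_{ij}-g_{ij}B_j$ with $B_i=g_{i\lambda}C_i'g_{\lambda i}$ satisfying $\imath^*B_i=0$, which by the criterion in Section~\ref{sec:vector} gives an equivalent bundle over $X_1$. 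One must also confirm that $\Delta'_{ij}$ really is a cocycle of the required type: $\imath^*\Delta'_{ij}=\imath^*\Delta_{ij}=dg_{ij}$ because $\imath^*C_{ij}=0$ (sections of $\NN_{X\slash Y}^*$ pull back to zero in $\Omega_X^1$), and the twisted cocycle identity $\Delta'_{ij}g_{ji}+g_{ij}\Delta'_{jk}g_{ki}+g_{ik}\Delta'_{ki}=0$ follows from the one for $\Delta$ together with the $\End(E)$-cocycle identity for $C$. Independence of the representative of $e$ is similar bookkeeping.

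Next, for bijectivity, I would exhibit the inverse directly. Given $e'\in\CC(E)$ with transition data $(g_{ij},\Delta'_{ij})$, the difference $\Delta'_{ij}-\Delta_{ij}$ satisfies $\imath^*(\Delta'_{ij}-\Delta_{ij})=dg_{ij}-dg_{ij}=0$, so $g_{i\lambda}(\Delta'_{ij}-\Delta_{ij})g_{\lambda j}$ lands in $\NN_{X\slash Y}^*\otimes\End(E)$, and the two twisted cocycle identities show it is a $\Cs$-linear $\End(E)$-valued \v{C}ech cocycle $C_{ij}$; its class $\gamma$ satisfies $e+\gamma=e'$ up to equivalence, and the equivalence ambiguity in $e'$ corresponds exactly to changing $C_{ij}$ by a coboundary, so $\gamma$ is well defined and $e'\mapsto\gamma$ inverts $\gamma\mapsto e+\gamma$. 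This simultaneously gives injectivity and surjectivity.

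Finally, commutativity of the square is a one-line computation with the Atiyah cocycle $A_{ij}=g_{\lambda i}\Delta_{i\lambda}-g_{\lambda j}\Delta_{j\lambda}$ from Section~\ref{sec:vector}: for $e'=e+\gamma$ one gets $A'_{ij}-A_{ij}=g_{\lambda i}(g_{i\lambda}C_{i\lambda}g_{\lambda\lambda})g_{i\lambda}-\cdots$, which after the obvious cancellations is precisely the image of the cocycle $C_{ij}$ under the map $\NN_{X\slash Y}^*\otimes\End(E)\hookrightarrow\Omega_{Y|X}^1\otimes\End(E)$ induced by the sheaf inclusion in the exact sequence of the Theorem's proof. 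Hence $\At(e+\gamma)=\At(e)+(\text{image of }\gamma)$, which is exactly the stated commutativity with the vertical map $-\At(e)$ (translation) on the right. I expect the main obstacle to be purely notational: keeping the $\lambda$-index conventions of Section~\ref{sec:Atiyah} consistent while manipulating the twisted (i.e.\ $g$-conjugated) \v{C}ech cochains, and making sure each intermediate expression that is written ``locally'' is genuinely a well-defined section of the bundle it is claimed to live in.
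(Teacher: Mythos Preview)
Your proposal is correct and is exactly the ``straightforward'' cocycle computation the paper alludes to; the paper gives no further details beyond pointing to the argument of Proposition~1, and your outline fills those in faithfully. The only rough edges are notational (e.g.\ the conjugation in your inverse should read $C_{ij}=g_{\lambda i}(\Delta'_{ij}-\Delta_{ij})g_{j\lambda}$ rather than $g_{i\lambda}(\cdots)g_{\lambda j}$, and $g_{\lambda\lambda}=\mathbbm{1}$), but these do not affect the argument.
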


A proof  (using the same argument as in   Proposition 1)  is   straightforward. 
As a consequence of Proposition 2 and the long exact sequence, bundles in  $\CC(E)$ with the same Atiyah class come from global sections  of the sheaf $\Omega_{X}^1\otimes\End(E)$.

\end{document}